\newtheorem{theorem}{Theorem}
\newenvironment{definition}{\df\rm}{\enddf}
\newenvironment{example}{\ex\rm}{\endex}
\newtheorem{lemma}[theorem]{Lemma}
\newtheorem{proposition}[theorem]{Proposition}
\newenvironment{remark}{\rem\rm}{\endrem}
\newcounter{unnumber}
\newenvironment{proof}{\prf\rm}{\hfill{$\blacksquare$}\endprf}
\newenvironment{assumption}{\as\rm}{\endas}
\newcommand{\R}{\mathbb{R}}%
\DeclareMathOperator*\inte{int}%
\DeclareMathOperator*\ri{ri}%
\DeclareMathOperator*\cl{cl}%
\DeclareMathOperator*\epi{epi}%
\DeclareMathOperator*\dom{dom}%
\DeclareMathOperator*\qi{qi}%
\DeclareMathOperator*\qri{qri}%
\DeclareMathOperator*\sqri{sqri}%
\DeclareMathOperator*\cone{cone}%
\DeclareMathOperator*\core{core}%
\DeclareMathOperator*\co{co}%
\DeclareMathOperator*\B{\overline{\R}}%
\DeclareMathOperator*\cvar{CVaR}
\DeclareMathOperator*\var{VaR}%%
\DeclareMathOperator*\essup{essup}%
\DeclareMathOperator*\esinf{essinf}%
\title{Looking for appropriate qualification conditions for subdifferential formulae and dual representations for convex risk measures}
\author{Radu Ioan Bo\c{t} \thanks
{Faculty of Mathematics, Chemnitz University of Technology,
D-09107 Chemnitz, Germany, e-mail:
radu.bot@mathematik.tu-chemnitz.de. Research partially supported by DFG (German Research Foundation), project WA 922/1-3.} \and Alina-Ramona Fr\u atean
\thanks {Faculty of Mathematics and Computer Science, Babe\c{s}-Bolyai University, Cluj-Napoca,
Romania, e-mail: alina-ramona.fratean@s2009.tu-chemnitz.de. Research done during the stay of the author in the academic year 2009/2010 at Chemnitz University of Technology
as a guest of the Chair of Applied Mathematics (Approximation Theory). The author wishes to thank for the financial support provided from programs co-financed by The Sectoral Operational Programme Human Resources Development,
Contract POSDRU 6/1.5/S/3 –- ``Doctoral studies: through science towards society''.}}
\begin{document}
\maketitle

\noindent \textbf{Abstract.} A fruitful idea, when providing subdifferential formulae and dual representations for convex risk measures, is to  make use of the
conjugate duality theory in convex optimization. In this paper we underline the outstanding role played by the qualification conditions in the context of different problem formulations in this area. We show that not only the meanwhile classical generalized interiority point ones come here to bear, but also a recently introduced one formulated by means of the quasi-relative interior.\\

\noindent \textbf{Key Words.} convex risk measures, optimized certainty equivalent, monotone and cash-invariant hulls, qualification conditions\\

\noindent \textbf{AMS subject classification.} 49N15, 90C25, 90C46, 91B30

\section{Introduction and preliminaries}

Let ${\cal X}$ be a separated locally convex vector space and ${\cal X}^*$ its
topological dual space. We denote by $\langle x^*,x\rangle$ the
value of the linear continuous functional $x^*\in {\cal X}^*$ at $x\in {\cal X}$.

For a subset $C$ of ${\cal X}$ we denote by $\co C$, $\cl C$ and
$\inte C$ its \emph{convex hull}, \emph{closure}
and \emph{interior}, respectively. The set $\cone
C:=\cup_{\lambda \geq 0}\lambda C$ denotes the \emph{cone
generated by $C$}, while the \emph{normal cone} of $C$ at $x\in C$ is
given by $N_C(x)=\{x^*\in {\cal X}^*:\langle x^*,y-x\rangle\leq 0 \ \forall y\in C\}$. When $C$ is a convex and closed set, by $C_\infty:=\{x \in {\cal X}: x+C \subseteq C\}$, which is in this case a convex closed cone, we denote the \emph{asymptotic cone} of $C$.

The \emph{indicator function} of a set $C\subseteq {\cal X}$, denoted by
$\delta_C$, is defined by
$\delta_C:{\cal X}\rightarrow\B:=\R\cup\{\pm\infty\}$,
$$\delta_C(x)=\left\{
\begin{array}{ll}
0, & \mbox {if } x\in C,\\
+\infty, & \mbox{otherwise}.
\end{array}\right.$$
For a function $f:{\cal X}\rightarrow\B$ we denote by $\dom f=\{x\in
{\cal X}:f(x)<+\infty\}$ its \emph{effective domain} and by $\epi f=\{(x,r)\in
{\cal X}\times\R:f(x)\leq r\}$ its \emph{epigraph}. We call $f$
\emph{proper} if $\dom(f)\neq\emptyset$ and $f(x)>-\infty$ for all
$x\in {\cal X}$. The \emph{Fenchel-Moreau conjugate} of $f$ is the function
$f^*:{\cal X}^*\rightarrow\B$ defined by
$$f^*(x^*)=\sup\limits_{x\in {\cal X}}\{\langle
x^*,x\rangle-f(x)\}\,\forall x^*\in {\cal X}^*.$$ Similarly, when ${\cal X}^*$ is endowed with the weak$^*$ topology, the
\emph{biconjugate function} of $f,$ $f^{**}:{\cal X}\rightarrow\B$, is given
by
$$f^{**}(x)=\sup\limits_{x^*\in {\cal X}^*}\{\langle
x^*,x\rangle-f^*(x^*)\}\,\forall x\in {\cal X}.$$ By the \emph{Fenchel-Moreau
Theorem}, whenever $f:{\cal X}\rightarrow\B$ is a proper, convex and lower
semicontinuous function, one has $f=f^{**}.$

For $f:{\cal X}\rightarrow\B$ an arbitrary function the set
$\partial f(x)=\{x^*\in {\cal X}^*: f(y)-f(x)\geq\langle x^*, y-x \rangle \ \forall y\in
{\cal X}\}$, when $f(x)\in\mathbb{R}$, denotes the \emph{subdifferential} of $f$
at $x$, while if $f(x)\in \{\pm\infty\}$ we take by convention
$\partial f(x)=\emptyset$. Regarding a function and its conjugate we have the \emph{Young-Fenchel inequality}
$f^*(x^*)+f(x)\geq\langle x^*,x\rangle$ for all $x\in {\cal X}$ and $x^*\in
{\cal X}^*.$ Moreover, for all $x\in {\cal X}, x^*\in {\cal X}^*$ one has
\begin{equation}\label{subdiff-conjug-zero}f^*(x^*)+f(x) = \langle x^*,x\rangle \Leftrightarrow x^*\in
\partial f(x).\end{equation}

If $f:{\cal X}\rightarrow\B$ is a proper, convex and lower semicontinuous function, then
by $f_\infty :{\cal X}\rightarrow\B$ we denote the \emph{recession function} of $f$, which is defined as being the function whose epigraph
is $(\epi f)_\infty$. The recession function is in this setting a proper, sublinear and lower semicontinuous function, while for all $d \in {\cal X}$
one has
$$f_{\infty}(d)=\sup\{f(x+d)-f(x):x \in \dom f\}$$
and (see, for instance, \cite{Zal-carte})
\begin{equation}\label{asymptotic_func}
f_{\infty}(d)=\lim_{\substack{t\to+\infty}}\frac{f(x+td)-f(x)}{t}=\sup_{\substack{t>0}}\frac{f(x+td)-f(x)}{t} \ \forall x\in\dom f.
\end{equation}

Having $f_i:{\cal X}\rightarrow\B, i=1,...,m,$ given proper functions we denote by $f_1\square...\square f_m :{\cal X}\rightarrow\B,$
$f_1\square...\square f_m(x)=\inf\{\sum_{i=1}^m f_i(x_i) :  \sum_{i=1}^m x_i = x\}$, for $x\in {\cal X}$, their \emph{infimal convolution}.

In the formulation of the qualification conditions which we employ in the investigations made in this paper we will make use of several generalized interiority notions.
For a convex set $C\subseteq {\cal X}$, we recall those interiority notions we need in the following:
\begin{enumerate}
\item[$\bullet$] the \emph{algebraic interior} or \emph{core} of $C$ (cf. \cite{Zal-carte}),\\
$\core C=\{x\in C: \cone(C-x)={\cal X} \};$
\item[$\bullet$] the \emph{strong quasi-relative interior} of $C$ (cf. \cite{Borwein-Jeyak-Lewis-Wolk, Zal-carte}),\\
$\sqri C=\{x\in C:\,\cone(C-x) \ \mbox{is a closed linear subspace of} \ {\cal X} \};$
\item[$\bullet$] the \emph{quasi-relative interior} of $C$ (cf. \cite{Borwein-Lewis}),\\
$\qri C=\{x\in C:\cl\cone(C-x)\ \mbox{is a linear subspace of} \ {\cal X}\}$
\item[$\bullet$] the \emph{quasi interior} of $C$ (cf. \cite{Limber-Goodrich}),\\
$\qi C=\{x\in C:\cl\cone(C-x)={\cal X}\}.$
\end{enumerate}
For the last two notions we have the following dual characterizations.

\begin{proposition} \label{caracterizari int} (cf. \cite{Borwein-Lewis, qri}) Let $C$ be a nonempty convex subset of
${\cal X}$ and $x\in C$. Then:
\begin{enumerate}
\item[(i)] $x\in\qri C$ $\Longleftrightarrow$ $N_C(x)$ is a linear subspace of ${\cal X}^*$;
\item[(ii)] $x\in\qi C$ $\Longleftrightarrow$
$N_C(x)=\{0\}$.\end{enumerate}
\end{proposition}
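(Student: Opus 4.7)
The plan is to derive both characterizations via polar-cone duality, applying the bipolar theorem to the closed convex cone $K := \cl\cone(C-x) \subseteq {\cal X}$. The crucial observation is that the normal cone coincides with the polar of $K$: from the definition, $N_C(x) = (C-x)^{\circ}$, and since passing to the convex conic hull and then to the closure does not change the polar, one obtains $N_C(x) = K^{\circ}$, where $K^{\circ} := \{x^* \in {\cal X}^* : \langle x^*, y\rangle \leq 0 \ \forall y \in K\}$. With ${\cal X}^*$ endowed with the weak$^*$ topology, the bipolar theorem then gives $K = K^{\circ\circ}$, since $K$ is a closed convex cone.

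For (i), I would establish the auxiliary fact that a closed convex cone $K$ is a linear subspace iff $K^{\circ}$ is a linear subspace of ${\cal X}^*$. A convex cone is a subspace iff it equals its own negative; thus $K = -K$ yields $K^{\circ} = (-K)^{\circ} = -K^{\circ}$, and conversely, if $K^{\circ} = -K^{\circ}$, then the bipolar identity $K = K^{\circ\circ} = (-K^{\circ})^{\circ} = -K$ shows $K$ is a subspace. Combined with the definition of $\qri C$, this yields (i) immediately.

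For (ii), I would apply the bipolar theorem directly to show that $K = {\cal X}$ is equivalent to $K^{\circ} = \{0\}$. One direction is transparent: if $K = {\cal X}$, then every $x^* \in K^{\circ}$ satisfies $\langle x^*, y\rangle \leq 0$ for all $y \in {\cal X}$, which forces $x^* = 0$. Conversely, if $K^{\circ} = \{0\}$, then $K = K^{\circ\circ} = \{0\}^{\circ} = {\cal X}$. Since $K^{\circ} = N_C(x)$, appealing to the definition of $\qi C$ delivers (ii).

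The whole argument is a routine exercise in polarity; the only delicate point is invoking the bipolar theorem with the correct topologies, namely the original topology on ${\cal X}$ and the weak$^*$ topology on ${\cal X}^*$. Since the paper already commits to this framework (e.g., in the definition of $f^{**}$), this poses no genuine obstacle.
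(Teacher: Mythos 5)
Your argument is correct: the identification $N_C(x)=(\cl\cone(C-x))^{\circ}$ together with the bipolar theorem (valid here because ${\cal X}$ is a separated locally convex space, so $({\cal X}^*,w^*)^*={\cal X}$) cleanly yields both equivalences. The paper itself states this proposition without proof, citing Borwein--Lewis and Bo\c t--Csetnek--Wanka, and your polarity argument is essentially the standard proof found in those references, so there is nothing to flag.
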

For a convex set $C\subseteq {\cal X}$ one has the following
relations of inclusion for the generalized interiority notions introduced above:
\begin{equation*}
\inte C \subseteq\core C\subseteq
\begin{array}{c}
\sqri C\\
~\\
\qi C
\end{array}
\subseteq\qri C \subseteq C,
\end{equation*}
all of them being in general strict. Between $\sqri$ and $\qi$ no relation of inclusion holds in general. For a comprehensive discussion, examples and counterexamples with this respect we refer to \cite{qri1}. If ${\cal X}$ is a finite-dimensional space, then $\qi C =\inte C =\core C$
(cf.\cite{Limber-Goodrich}) and $\qri C =\sqri C =\ri C$ (cf.
\cite{Borwein-Lewis}), where $\ri C$ is the \emph{relative interior}
of $C$. In case $\inte C\neq\emptyset$ all the generalized
interiority notions collapse into the topological interior of the set $C$.

In the following we turn our attention to the \emph{Lagrange duality} for
the optimization problem with geometric and cone constraints
$$(P)\mbox{ }\mbox{ }\inf_{\substack{x\in S\\g(x)\in-K}}f(x).$$

Here ${\cal X}$ and ${\cal Z}$ are two separated locally convex spaces,
the latter being partially ordered by the nonempty convex cone
$K\subseteq {\cal Z},$ $S\subseteq {\cal X}$ is a nonempty set,
$f:{\cal X}\rightarrow\B$ is a proper function and
$g:{\cal X}\rightarrow {\cal Z}$ is a vector function fulfilling $\dom f\cap S\cap g^{-1}(-K)\neq\emptyset$. We
denote by $\geq_{K}$ the \emph{partial ordering} induced by $K$ on ${\cal Z}$, defined for $u,v \in {\cal Z}$ by $u \geq_K v$ whenever $u-v \in K$, and
by $K^*=\{x^*\in {\cal X}^*:\langle x^*,x\rangle \geq 0 \ \forall x\in K\}$
the \emph{dual cone} of $K$.

The $K$-\emph{epigraph} of $g:{\cal X}\rightarrow {\cal Z}$ is the set $\epi_{K}g=\{(x,z)\in {\cal X}\times {\cal Z}:z \geq_{K} g(x)\}$. The vector function $g$ is said to be $K$-\emph{convex} if $\epi_{K}g$ is convex and $K$-\emph{epi closed}  if $\epi_{K}g$ is closed.

We further assume that $S$ is a convex set, $f$ is a convex function and $g$ a $K$-convex vector function. The Lagrange dual problem associated to $(P)$ is
$$(D)\mbox{ }\mbox{ }\sup_{\lambda\in K^*}\inf_{x\in
S}\{f(x)+\langle\lambda,g(x)\rangle\}.$$ By $v(P)$ and $v(D)$ we denote the optimal objective values of the primal and
the dual problem, respectively. It is a known fact that between the
primal and the dual problem \emph{weak duality}, i.e. $v(P) \geq v(D)$, always holds.
In order to guarantee strong duality, i.e. the situation when $v(P) = v(D)$ and $(D)$ has an optimal solution,
we additionally need to require the fulfillment of a so-called \emph{qualification condition}. In the literature one can distinguish between two main classes
of qualification conditions, the so-called \emph{generalized interiority
point} and \emph{closedness-type} conditions, respectively. For an overview on the
relations between these two classes we refer to \cite{Bot}.

Throughout this paper we deal with qualification conditions of the first type and
discuss their applicability in the context of different topics involving convex risk measures.
To this end we consider the \emph{Slater constraint
qualification}
\begin{eqnarray*}\label{QC1} (QC_1)\mbox{  }\mbox{  }
\exists x'\in \dom f\cap S\mbox{
such that } g(x')\in -\inte K
\end{eqnarray*}
as well as the generalized interiority point qualification conditions (cf. \cite{Bot})
\begin{eqnarray*}\label{QC2} (QC_{2})&&  {\cal X} \mbox{ and } {\cal Z} \mbox{
are Fr\'echet spaces, } S \mbox{ is closed, } f \mbox{ is lower
semicontinuous, }\\ && g \mbox{ is $K$-epi closed and }
0\in\core(g(\dom f\cap S)+K),\end{eqnarray*}
and
\begin{eqnarray*}\label{QC3} (QC_{3})&&  {\cal X} \mbox{ and } {\cal Z} \mbox{
are Fr\'echet spaces, } S \mbox{ is closed, } f \mbox{ is lower
semicontinuous, }\\&& g \mbox{ is  $K$-epi closed and }
0\in\sqri(g(\dom f\cap S)+K).\end{eqnarray*}

Assuming that $v(P) \in \R$, along the above qualification conditions, we consider also the following one introduced in \cite{qri} (see, also, \cite{qrim, qri1}) and
expressed by means of the quasi interior and quasi-relative interior
\begin{eqnarray*}\label{QC4} (QC_{4}) &&  \exists x'\in \dom f\cap S\mbox{
such that } g(x')\in -\qri K,\  \cl(K-K)={\cal Z} \mbox{ and }\\
&&(0,0)\notin\qri\left[\co\left(\mathcal{E}_{v(P)}\cup\{(0,0)\}\right)\right]\end{eqnarray*}
where $\mathcal{E}_{v(P)}=\{(f(x)-v(P)+\epsilon,
g(x)+z):x\in\dom f\cap S, z\in K, \epsilon\geq 0\}$ is the set
in analogy to the \emph{the conic extension,} a notion used by
Giannessi in the theory of \emph{image spaces analysis}
(see \cite{giannessi}). If $0 \in \qi[(g(\dom f \cap S) + K) - (g(\dom f \cap S) + K)]$, then $(0,0)\notin\qri\left[\co\left(\mathcal{E}_{v(P)}\cup\{(0,0)\}\right)\right]$
is equivalent to $(0,0)\notin\qi\left[\co\left(\mathcal{E}_{v(P)}\cup\{(0,0)\}\right)\right]$. On the other hand, whenever $(P)$ has an optimal solution one has $\co\left(\mathcal{E}_{v(P)}\cup\{(0,0)\}\right) = \mathcal{E}_{v(P)}$. For further qualification conditions expressed by means of the quasi interior and quasi relative-interior we refer to \cite{qri,qri1}. Different to $(QC_i), i\in \{2,3\}$, these conditions have the remarkable property that they do not require the fulfillment of any topological assumption for the set $S$ or for the functions $f$ and $g$ and they do not restrict the spaces ${\cal X}$ and ${\cal Z}$ to be Fr\'echet. More than that, they find applicability in situations where $K$ is the ordering cone of a separable Banach space, like $\ell^p$ or $L^p$, $p \in [1,\infty)$ (see \cite{qrim,qri1,qri}). This is because of the fact that these ordering cones have nonempty quasi-relative interiors and quasi interiors, all the other interiority notions furnishing the empty set. The assumption that $v(P)$ is a real number is not restrictive at all, since, otherwise, namely, when $v(P) = -\infty$, strong duality is automatically fulfilled.

\begin{remark}\label{remqualcond} When ${\cal X}$ and ${\cal Z}$ are Fr\'echet spaces and $f,g$
are proper, convex and lower semicontinuous functions we have the
following relations between the above qualification conditions
$(QC_1)\Rightarrow(QC_2)\Rightarrow(QC_3)$ and, whenever $v(P) \in \R$,
$(QC_1)\Rightarrow(QC_2)\Rightarrow(QC_4)$. In
general the conditions $(QC_3)$ and $(QC_4)$ cannot be compared, for more on this topic
the reader being invited to consult \cite{qri1}.
\end{remark}

\begin{theorem}\label{strong_duality}
Assume that $v(P) \in \R$. If one of the qualification conditions $(QC_i), i\in \{1,...,4\}$, is fulfilled, then
$v(P)=v(D)$ and the dual problem has an optimal solution.
\end{theorem}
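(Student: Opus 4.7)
The plan is to use the value function
\begin{equation*}
h:{\cal Z}\to\B,\quad h(z)=\inf\{f(x):x\in S,\ g(x)+z\in -K\},
\end{equation*}
which is convex by the convexity of $S$, $f$ and the $K$-convexity of $g$, and satisfies $h(0)=v(P)\in\R$. A direct calculation identifies its conjugate as $h^*(\lambda)=-\inf_{x\in S}\{f(x)+\langle\lambda,g(x)\rangle\}$ for $\lambda\in K^*$ and $h^*(\lambda)=+\infty$ otherwise, so that $v(D)=h^{**}(0)$; by (\ref{subdiff-conjug-zero}), strong duality together with dual solvability is then equivalent to $\partial h(0)\neq\emptyset$. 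Once this reformulation is in place, the whole theorem reduces to producing, under each $(QC_i)$, a subgradient of $h$ at the origin.

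For $(QC_1)$ I would run the classical Slater argument: form the convex set
\begin{equation*}
M=\{(g(x)+k,f(x)-v(P)+\rho):x\in\dom f\cap S,\ k\in K,\ \rho>0\}\subseteq{\cal Z}\times\R,
\end{equation*}
observe that $(0,0)\notin M$ while $\inte M\neq\emptyset$ thanks to the Slater point $x'$ with $g(x')\in-\inte K$, and invoke Hahn--Banach to obtain a separating pair $(\lambda,\mu)\in{\cal Z}^*\times\R\setminus\{0\}$. Letting $\rho\to\infty$ forces $\mu\geq 0$, the strict feasibility of $x'$ rules out $\mu=0$, and after normalising $\mu=1$, testing against arbitrary $k\in K$ yields $\lambda\in K^*$ while testing at feasible $x$ delivers exactly $-\lambda\in\partial h(0)$.

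For $(QC_2)$ and $(QC_3)$ I would invoke the generalized-interior-point Lagrange theory for Fr\'echet spaces developed in \cite{Bot}: under the closedness and lower semicontinuity hypotheses embedded in these conditions, $h$ is itself lower semicontinuous, and $0\in\sqri(g(\dom f\cap S)+K)$ is shown there to force $\partial h(0)\neq\emptyset$; since $(QC_2)\Rightarrow(QC_3)$ by Remark~\ref{remqualcond}, a single invocation handles both. For $(QC_4)$ I would appeal to the quasi-relative interior Lagrange theorem of \cite{qri}, which combines the three ingredients $g(x')\in-\qri K$, $\cl(K-K)={\cal Z}$, and $(0,0)\notin\qri[\co(\mathcal{E}_{v(P)}\cup\{(0,0)\})]$, translating the quasi-relative interior hypotheses via Proposition~\ref{caracterizari int}(i) into statements about normal cones being linear subspaces, to produce a multiplier $\lambda\in K^*$ realising strong duality.

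The genuine obstacle is the $(QC_4)$ step: when $K$ is the ordering cone of $\ell^p$ or $L^p$, both $\inte K$ and $\core K$ are empty, so the textbook Hahn--Banach separation that powers $(QC_1)$--$(QC_3)$ is simply unavailable and must be replaced by the more delicate quasi-relative interior separation argument of \cite{qri}, whose non-triviality hinges precisely on the role of $\mathcal{E}_{v(P)}$ and the condition $\cl(K-K)={\cal Z}$. The remaining three cases are, by contrast, either a direct Slater separation or an appeal to previously published generalized-interior results.
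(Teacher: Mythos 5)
The paper offers no proof of this theorem---it is stated as a compilation of known strong duality results, with the cases $(QC_2)$--$(QC_4)$ resting on \cite{Bot} and \cite{qri} exactly as you propose---and your value-function reduction ($v(D)=h^{**}(0)$, and zero gap with dual solvability equivalent to $\partial h(0)\neq\emptyset$) together with the Slater separation for $(QC_1)$ is the standard underlying argument, so your proposal matches the intended justification. One small caveat: under $(QC_2)$/$(QC_3)$ the arguments in \cite{Bot} do not proceed by showing that the value function $h$ is lower semicontinuous (lower semicontinuity of $h$ at $0$ is a \emph{consequence} of $\partial h(0)\neq\emptyset$, not an ingredient; the real work is a Fr\'echet-space open-mapping/category argument on $\dom h = -(g(\dom f\cap S)+K)$), but since you defer to that reference for this step the mischaracterization is harmless.
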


We consider further an atomless probability space $(\Omega,\mathfrak{F},\mathbb{P}),$ where $\Omega$ denotes the
space of future states $\omega,$ $\mathfrak{F}$ is a
\emph{$\sigma$-algebra} on $\Omega$ and $\mathbb{P}$ is a
\emph{probability measure} on $(\Omega,\mathfrak{F}).$ For a
measurable random variable $X:\Omega\to\R \cup \{+\infty\}$ the \emph{expectation
value} with respect to $\mathbb{P}$ is defined by
$\mathbb{E}(X):=\int_{\Omega}X(\omega)\,d\mathbb{P}(\omega).$ Whenever $X$ takes the value $+\infty$ on a subset of positive measure
we have $\mathbb{E}(X) = +\infty$. The \emph{essential supremum} of $X$, which represents the smallest essential upper bound of the random variable, is
$\essup X :=\inf\{a\in\R:\mathbb{P}\left(\omega:X(\omega)>a\right)=0\},$ while its \emph{essential infimum} is defined by
$\esinf X :=-\essup (-X)$.

Further, for $p\in[1,\infty)$ let we consider the following space of random variables
$$L^{p}:=L^{p}(\Omega,\mathfrak{F},\mathbb{P},\R)=\left\{X:\Omega\to\R:  X \mbox{ is measurable, }
\int_{\Omega}|X(\omega)|^{p}d\mathbb{P}(\omega)<+\infty\right\}.$$ The space
$L^{p}$ equipped with the norm
$\|X\|_{p}=\left(\mathbb{E}(|X|^{p})\right)^\frac{1}{p}$ is a Banach
space.  To complete the picture of
$L^{p}$ spaces, we introduce the space corresponding to the limiting
value $p=\infty,$ namely
$$L^{\infty}:=L^{\infty}(\Omega,\mathfrak{F},\mathbb{P},\R)=\left\{X:\Omega\to\R:
X \mbox{ is measurable, }\essup |X| <+\infty\right\},$$ which,
being equipped with the norm $\|X\|_{\infty}=\essup |X|$, is a Banach space, too.
For $p, q \in [1,\infty], p \geq q$, it holds $L^p \subseteq L^q$.
We denote the topological dual space of $L^p$ by $(L^{p})^{*}$ and for $p \in [1,\infty)$ one has that
$(L^{p})^{*}=L^q$, where $q\in(1,\infty]$ fulfills $q=p/(p-1)$ (with the convention $1/0 = \infty$).
In what concerns $(L^\infty)^*$, the topological dual space of $L^{\infty}$, this can be
identified with \emph{ba,} the space of all bounded finitely
additive measures on $(\Omega,\mathfrak{F})$ which are absolutely
continuous with respect to $\mathbb{P}$ and it is
usually much bigger than $L^{1}$. For the dual pairing $(X,X^*) \in (L^{p}, (L^{p})^{*})$ we shall write $\langle
X^{*},X\rangle=\mathbb{E}(X^{*}X)$ (even in the case $p=\infty$, by making an abuse of notation).

Equalities between random variables are to be interpreted in an \textit{almost everywhere (a.e.)} way, while for $X,Y \in L^p$ we write
$X \geq Y$ if and only if $X-Y \in L^{p}_{+}:=\{X\in L^{p}:X\geq 0 \ \mbox{a.e.}\}$. We also write $X > Y$ if $X(\omega) > Y(\omega)$ for almost every $\omega \in \Omega$. Random variables $X:\Omega\to\R$ which take a constant value $c\in\R,$ i.e $X=c$ a.e., will be identified with the real number $c$. Each random variable $X:\Omega\to\R$ can be represented as $X=X_{+}-X_{-}$, where $X_{+},X_{-}:\Omega\to\R$ are the random variables defined by $X_{+}(\omega)=\max\{0,X(\omega)\}$ and $X_{-}(\omega)=\max\{0,-X(\omega)\}$ for all $\omega \in \Omega$. The \emph{characteristic function} of a
set $G\in\mathfrak{F}$ is defined as being $\mathbf{1}_G:\Omega\to\R$,
$$\mathbf{1}_G(\omega)=\left\{
\begin{array}{ll}
1, & \mbox {if } \omega\in G,\\
0, & \mbox{otherwise}
\end{array}\right.$$
and, in view of the above notion, the expectation of a random variable
$X$ admits the equivalent representation $\mathbb{E}(X)=\langle\mathbf{1}_{\Omega}, X\rangle,$ which will be used several times in
this article.

Although the first axiomatic way of defining risk measures has been
given by Artzner, Delbaen, Eber and Heath in \cite{Artzen}
and refers to \emph{coherent risk measures}, it has become a standard
in modern risk management to assess the riskness of a portfolio by means of
\emph{convex risk measures}. The latter have been introduced by F\"{o}llmer
and Schied in \cite{Folmer}.

\begin{definition}\label{masura convexa}
We call \emph{risk function} a proper function $\rho:L^{p} \to \B, p \in [1,\infty]$. The risk function $\rho$ is said to be
\begin{enumerate}
\item[(i)] \mbox{\emph{convex}, if}: $\rho(\lambda X+(1-\lambda)Y)\leq\lambda\rho(X)+(1-\lambda)\rho(Y) \  \forall \lambda\in[0,1] \ \forall X,Y\in L^{p}$;
\item[(ii)] \mbox{\emph{positively homogeneous}, if}: $\rho(0) = 0$ and $\rho(\lambda X) = \lambda\rho(X) \  \forall \lambda > 0 \ \forall X \in L^{p}$;
\item[(iii)] \mbox{\emph{monotone}, if}: $X\geq Y$ $\Rightarrow \rho(X)\leq\rho(Y) \ \forall X,Y \in L^p$;
\item[(iv)] \mbox{\emph{cash-invariant}, if}: $\rho(X+a)=\rho(X)-a \ \forall X\in L^{p} \ \forall a\in\R$;
\item[(v)] \mbox{a \emph{convex risk measure}, if}: $\rho$ is convex, monotone and cash-invariant;
\item[(vi)] \mbox{a \emph{coherent risk measure}, if}: $\rho$ is a positively homogeneous convex risk measure.
\end{enumerate}
\end{definition}

The literature on convex risk measures has known in the last time a rapid growth. For examples of coherent and convex risk measures we refer the reader to
\cite{Bot-Lorenz, filipovic, Svindland, Folmer, Rock-cvar, Rock-cvar2, rock, Rusz}, some of them being the object of the investigations we make in the following sections of this paper. More precisely, we provide in the following subdifferential formulae and dual representations for different risk measures by making use of the conjugate duality theory in convex optimization, beyond the results obtained on this topic in \cite{Bot-Lorenz, Pflug, Rock-optim_cond, rock, Rusz}.

In the following section we consider a generalized convex risk measure defined via a so-called \emph{utility function} and associated with the Optimized Certainty Equivalent (OCE), a notion introduced and explored in \cite{teboulle1, teboulle2}. This convex risk measure is expressed as an infimal value function, thus we provide first of all a weak sufficient condition for the attainment of the infimum in its definition. Further, we give formulae for its conjugate function and its subdifferential. The generalized convex risk measure we consider has the advantage that, for some particular choices of the utility function, it leads to some well-known convex risk measures, for the conjugate and subdifferential of which we are consequently able to derive the corresponding formulae.

The results in the sections 4 and 5 have as starting point the paper of Filipovi\'c and Kupper \cite{filipovic}, where for a convex risk function the so-called \emph{monotone cash-invariant hull} has been introduced, which is actually the greatest monotone and cash-invariant function majorized by the risk function. This function has been formulated by making use of the \emph{infimal convolution}. In other words, the monotone cash-invariant hull at a given point is nothing else than the optimal objective value of a convex optimization problem. Having as a starting point this observation, we give here a dual representation of the monotone and cash-invariant hull by employing the Lagrange duality theory along with a qualification condition, under the hypothesis that the risk function is \emph{lower semicontinuous}. This guarantees the vanishing of the duality gap and, implicitly, the validity of the dual representation. The examples considered in \cite{filipovic} are discussed from this new point of view.

In the last section of the paper we deal with the same problem as in Section 4, but by considering this time a convex risk function which does not fulfill the lower semicontinuity assumption. For this function we can easily establish the \emph{monotone hull} and we can also give a dual representation for it by making use of the quasi-relative interiority-type qualification condition $(QC_4)$. We also refer to the limitations of this approach in the context of the determination of the \emph{monotone cash-invariant hull} for the function in discussion.

\section{Conjugate and subdifferential formulae for convex risk measures via Optimized Certainty Equivalent}

In this section we will furnish first formulae for both conjugate and subdifferential of a generalized convex risk measure, associated with the Optimized Certainty Equivalent (OCE). The Optimized Certainty Equivalent was introduced by Ben-Tal and Teboulle in \cite{teboulle1} by making use of a \emph{concave
utility function}. For properties of OCE and for relations with other certainty equivalent measures we refer to \cite{teboulle1,teboulle2}. For the investigations in this paper we adapt the definition of the Optimized Certainty Equivalent and the setting in which this has been introduced, by considering a \emph{convex utility function}, as this better suits in the general framework of convex duality. We close the section by particularizing the general results to some convex risk measures widely used in the literature.

Let us start by fixing the framework in which we work throughout the section.

\begin{assumption}\label{utility}
Let $v:\R\to \B$ be a proper, convex, lower semicontinuous and nonincreasing  function such that $v(0)=0$ and $ -1\in\partial v(0)$.
\end{assumption}

\begin{remark}\label{normalizare}
The two conditions we impose on the utility function $v$ are also known as \emph{normalization conditions}. By
exploiting the definition of the subdifferential, they can be equivalently written as $v(0) = 0$ and $v(t)+t \geq 0$ for all $t\in\R.$
\end{remark}

Having as starting point the definition of the Optimized Certainty Equivalent given in \cite{teboulle2} we define for $p \in [1,\infty]$ the following
\emph{generalized convex risk measure} $\rho_{v}:L^{p}\to \R \cup \{+\infty\}$
\begin{equation}\label{OCE}
\rho_{v}(X)= \inf_{\lambda\in\R}\{\lambda+\mathbb{E}(v(X+\lambda))\}.
\end{equation}

One can easily see that, due to the Assumption, $\rho_{v}(X) \geq -\mathbb{E}(X)$ for all $X \in L^p$ and that this function satisfies
the properties required in the definition of a convex risk measure. Next we provide a formula for the conjugate of $\rho_v$.

\begin{lemma}\label{conjug_oce}
The conjugate function of $\rho_{v}$, $\rho_{v}^{*}:(L^{p})^{*}\to\B$, is given by
\begin{eqnarray}\rho_{v}^{*}(X^*)=\left\{
\begin{array}{ll}
\mathbb{E}(v^{*}(X^{*})), & \mbox {if }\  \mathbb{E}(X^{*})=-1,\\
+\infty, & \mbox{otherwise}.
\end{array}\right.
\end{eqnarray}
\end{lemma}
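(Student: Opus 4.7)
The argument is a direct computation from the definition of the Fenchel--Moreau conjugate, the main manoeuvre being a translation in the integrand. I would start by writing
$$\rho_{v}^{*}(X^*) \;=\; \sup_{X \in L^p}\bigl\{\mathbb{E}(X^*X) - \rho_{v}(X)\bigr\} \;=\; \sup_{X \in L^p,\,\lambda \in \R}\bigl\{\mathbb{E}(X^*X) - \lambda - \mathbb{E}(v(X+\lambda))\bigr\},$$
where the second equality is the routine observation that subtracting an infimum produces a supremum over the same parameter $\lambda$.

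The decisive step is the change of variable $Y := X + \lambda$, which decouples the two variables through $\mathbb{E}(X^*X) = \mathbb{E}(X^*Y) - \lambda\,\mathbb{E}(X^*)$:
$$\mathbb{E}(X^*X) - \lambda - \mathbb{E}(v(X+\lambda)) \;=\; \mathbb{E}(X^*Y) - \mathbb{E}(v(Y)) - \lambda\bigl(1+\mathbb{E}(X^*)\bigr).$$
The supremum over $\lambda \in \R$ of the last contribution is $0$ when $\mathbb{E}(X^*) = -1$ and $+\infty$ otherwise, which instantly delivers the second branch of the claimed formula and reduces the first branch to the identity
$$\sup_{Y \in L^p}\bigl\{\mathbb{E}(X^*Y) - \mathbb{E}(v(Y))\bigr\} \;=\; \mathbb{E}(v^{*}(X^*)).$$

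The ``$\le$'' direction in the display above follows at once from the pointwise Young--Fenchel inequality $X^*(\omega)Y(\omega) - v(Y(\omega)) \le v^{*}(X^*(\omega))$ integrated with respect to $\mathbb{P}$, and is the easy half. The reverse inequality is the technical core and the main obstacle: it is an instance of Rockafellar's interchange theorem for integral functionals of a (deterministic, hence trivially normal) proper convex lower semicontinuous integrand $v$. Concretely, for each $\eps > 0$ one picks a measurable approximate maximizer $Y_\eps$ satisfying $X^*(\omega)Y_\eps(\omega) - v(Y_\eps(\omega)) \ge v^{*}(X^*(\omega)) - \eps$ a.e.\ by a standard measurable-selection argument, truncates the values so that $Y_\eps \in L^p$, and passes to the limit $\eps \downarrow 0$ by monotone convergence. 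For $p \in [1,\infty)$ with $X^* \in L^q$ this is classical; for $p = \infty$ some extra care is required to handle the singular part of $X^* \in \mathrm{ba}$, but the deterministic nature of the integrand $v$ keeps the conclusion intact. Once the interchange step is accepted, both branches of the formula are in place.
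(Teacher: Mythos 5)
Your proposal is correct and follows essentially the same route as the paper: the substitution $Y=X+\lambda$ to decouple the variables, the observation that the supremum over $\lambda$ produces the constraint $\mathbb{E}(X^*)=-1$ (and $+\infty$ otherwise), and the interchange of supremum and expectation via Rockafellar's theorem (the paper cites \cite[Theorem 14.60]{Rock-Wets}) to identify the remaining supremum with $\mathbb{E}(v^*(X^*))$. The paper simply invokes the interchange theorem without spelling out the measurable-selection and truncation details you sketch.
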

\begin{proof} By the definition of the conjugate function we get for all $X^* \in (L^{p})^{*}$
 \begin{eqnarray*}
 \rho_{v}^{*}(X^{*})&=&\sup_{\substack{X\in L^{p}\\\lambda\in\R}}
 \{\langle X^*,X\rangle-\lambda-\mathbb{E}(v(X+\lambda))\}=\sup_{\substack{R\in L^{p}\\\lambda\in\R}}
 \{\langle X^*,R-\lambda\rangle-\lambda-\mathbb{E}(v(R))\}\\
 &=&\sup_{\substack{\lambda\in\R}}
\{-\lambda(\mathbb{E}(X^*)+1)\}+\sup_{\substack{R\in L^{p}}}
 \{\langle X^*,R\rangle-\mathbb{E}(v(R))\}.
\end{eqnarray*}
Using the interchangeability property of minimization and integration
(see, for instance, \cite[Theorem 14.60]{Rock-Wets}) the second expression from above can be written as
$$\sup_{\substack{R\in L^{p}}}  \{\langle X^*,R\rangle-\mathbb{E}(v(R))\} = \mathbb{E}\left \{\sup_{\substack{r\in\R}}(rX^{*}-v(r)) \right\} = \mathbb{E}(v^{*}(X^{*})).$$ On the other hand, since $\sup_{\lambda\in\R}\{-\lambda(\mathbb{E}(X^*)+1)\}=\delta_{\{0\}}(\mathbb{E}(X^*)+1)$, one obtains the desired conclusion.
\end{proof}

Before providing a subdifferential formula for $\rho_{v}$, we deliver via Lagrange duality a sufficient condition the utility function $v$ has to fulfill in order to guarantee the attainment of the infimum in the definition of $\rho_v(X)$ for all $X \in L^p$. According to \cite{teboulle1,teboulle2}, for those $X \in L^p$ having as support a bounded and closed interval, the infimum in \eqref{OCE} is attained. But what we provide here, is a condition which  ensures this fact independently from the choice of the random variable.

Let $X \in L^p$ be fixed. Consider the following primal optimization problem
\begin{equation}\label{primala}
\inf_{\substack{\Xi \in L^q\\ \mathbb{E}(\Xi)=-1}}  \bigg[\mathbb{E}(v^{*}(\Xi))-\langle X, \Xi \rangle  \bigg],
\end{equation}
where $q:=\frac{p}{p-1}$, if $p \in [1,\infty)$, and $q:=1$, if $p=\infty$. The \emph{Lagrange dual} optimization problem to
\eqref{primala} is given by
\begin{equation*}
\sup_{\lambda \in \R} \inf_{\Xi \in L^q} \Big [ \mathbb{E}(v^{*}(\Xi)) -\langle X, \Xi \rangle + \lambda (\mathbb{E}(\Xi) + 1) \Big] = \sup_{\lambda \in \R} \bigg [\lambda - \sup_{\Xi \in L^q} \Big (\langle X-\lambda, \Xi \rangle - \mathbb{E}(v^{*}(\Xi)) \Big)\bigg ].
\end{equation*}
Again, via \cite[Theorem 14.60]{Rock-Wets}, it holds
$$\sup_{\Xi \in L^q}\bigg(\langle X-\lambda, \Xi \rangle - \mathbb{E}(v^{*}(\Xi))\bigg) = \mathbb{E} \bigg(\sup_{\substack{r\in\R}}(r(X-\lambda)-v^*(r))\bigg) = \mathbb{E}(v(X-\lambda))$$
and this leads to the following dual problem to \eqref{primala}
\begin{equation}\label{duala}
\sup_{\substack{\lambda\in\R}} \bigg[-\lambda - \mathbb{E}(v(X + \lambda))\bigg].
\end{equation}
Let us notice that the optimal objective value of the dual problem \eqref{duala} is equal to $-\rho_v(X)$.

\begin{theorem}\label{thexist}
Assume that
\begin{equation}\label{cond}
\{d\in\R:v_{\infty}(d)=-d\}=\{0\}.
\end{equation}
Then for all $X \in L^p$ there exists $\bar \lambda(X) \in \R$ such that $\rho_v(X) = \bar \lambda(X) +\mathbb{E}(v(X+\bar \lambda(X)))$.
\end{theorem}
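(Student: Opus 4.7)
The plan is to introduce the auxiliary convex function $h_X:\R\to\B$ by $h_X(\lambda):=\lambda+\mathbb{E}(v(X+\lambda))$, so that $\rho_v(X)=\inf_{\lambda\in\R}h_X(\lambda)$, and then to show that under \eqref{cond} this $h_X$ is proper, convex, lower semicontinuous and coercive, so that its infimum is attained at some $\bar\lambda(X)\in\R$. Convexity is inherited from $v$ via linearity of $\mathbb{E}$. For lower semicontinuity I would take $\lambda_n\to\lambda$ in $\R$ and apply Fatou's lemma to $v(X+\lambda_n)$; the normalization $v(t)+t\ge 0$ from Remark \ref{normalizare} furnishes the minorant $v(X+\lambda_n)\ge -(X+\lambda_n)$, which for a bounded sequence $(\lambda_n)$ is dominated by an $L^1$ random variable, since $L^p\subseteq L^1$ on a probability space. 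Properness only has to be discussed in the nontrivial case $\rho_v(X)<+\infty$; otherwise the conclusion is vacuous.

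The heart of the matter is the recession identity $(h_X)_\infty(d)=d+v_\infty(d)$. Fixing any $\lambda_0\in\dom h_X$ and invoking \eqref{asymptotic_func}, one writes
\[
(h_X)_\infty(d)\;=\;d\;+\;\sup_{t>0}\mathbb{E}\!\left(\frac{v(X+\lambda_0+td)-v(X+\lambda_0)}{t}\right).
\]
For each $\omega$ the difference quotient inside is nondecreasing in $t>0$ by convexity of $v$, and the inequality $v(s)+s\ge 0$ furnishes an $L^1$ minorant of these integrands (for example, for $t\ge 1$, together with the fact that $v(X+\lambda_0)\in L^1$ since $\lambda_0\in\dom h_X$). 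The monotone convergence theorem therefore permits interchanging the supremum and the expectation; the pointwise supremum equals $v_\infty(d)$ by \eqref{asymptotic_func} applied to $v$, and since $v_\infty(d)$ is deterministic, $(h_X)_\infty(d)=d+v_\infty(d)$.

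Finally, $v(t)\ge -t$ passes to the recession level as $v_\infty(d)\ge -d$, so $(h_X)_\infty(d)\ge 0$ for every $d\in\R$; hypothesis \eqref{cond} upgrades this to $(h_X)_\infty(d)>0$ whenever $d\ne 0$. A proper convex lower semicontinuous function on $\R$ whose recession function is strictly positive off the origin has bounded sublevel sets and hence attains its infimum; this attaining point is the desired $\bar\lambda(X)$. The main obstacle is the technical interchange of supremum and expectation in the middle step, which hinges on monotonicity of the convex difference quotients together with the $L^1$ minorant coming from the normalization $v(t)+t\ge 0$. Once the identity $(h_X)_\infty(d)=d+v_\infty(d)$ is in hand, the translation of the one-dimensional hypothesis \eqref{cond} into coercivity of $h_X$, and hence into the existence of $\bar\lambda(X)$, is automatic.
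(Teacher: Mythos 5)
Your proof is correct, but it takes a genuinely different route from the paper. The paper never looks at the one-dimensional value function $\lambda\mapsto\lambda+\mathbb{E}(v(X+\lambda))$ directly; instead it observes that $-\rho_v(X)$ is the optimal value of the Lagrange dual \eqref{duala} of the primal problem \eqref{primala} posed in $L^q$, and it secures attainment of the dual optimum by verifying the generalized interior-point qualification condition $(QC_3)$ for \eqref{primala} and invoking the strong duality theorem. The key step there is the same computation you perform at the scalar level: with $s(t)=v(t)+t$ one has $s_\infty(d)=v_\infty(d)+d\ge 0$, and \eqref{cond} says $s_\infty$ vanishes only at the origin; the paper then passes through \cite[Theorem 3.2.1]{teboulle-carte} to conclude $0\in\ri(\dom s^*)=\ri(\dom v^*+1)$, which is exactly what $(QC_3)$ requires. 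Your argument keeps everything in $\R$: you prove $(h_X)_\infty(d)=d+v_\infty(d)=s_\infty(d)$ (note the recession function of $h_X$ does not depend on $X$) and read off coercivity, hence attainment, directly. What your route buys is self-containedness and elementarity --- no appeal to the infinite-dimensional strong duality machinery or to a qualification condition, only Fatou, monotone convergence for the increasing difference quotients, and the standard fact that a proper lsc convex function on $\R$ with recession function positive off the origin is level-bounded. The price is that you must justify the measure-theoretic interchanges yourself (lower semicontinuity of $h_X$ via the integrable minorant $-(X+\lambda)$ coming from $v(t)+t\ge 0$, and the sup/expectation swap via monotonicity of the quotients with the minorant at $t=1$), which you do correctly; the paper outsources the analogous work to the interchangeability theorem \cite[Theorem 14.60]{Rock-Wets} used in computing the conjugates. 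Your handling of the degenerate case $\rho_v(X)=+\infty$ as vacuous is also fine, since then every $\lambda$ satisfies the claimed identity.
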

\begin{proof}
We consider $X \in L^p$ fixed and prove that under condition \eqref{cond} for the primal-dual pair \eqref{primala}-\eqref{duala} strong duality holds. This will guarantee among others the existence of an optimal solution $\bar \lambda(X)$ for the dual, which will prove the assertion.

Define $s:\R\to\B$ as being $s(t) = v(t) + t$. Notice that $s$ is proper, convex and lower semicontinuous, too, and
for all $t^* \in \R$ it holds $s^{*}(t^{*})=v^{*}(t^{*}-1)$, so $\dom s^{*}=\dom v^{*}+1$. On the other hand, since $0 \in \dom s$, it holds (see \eqref{asymptotic_func}) \begin{equation}\label{condinf}
s_{\infty}(d)=\sup_{t>0} \frac{s(td)-s(0)}{t}= \sup_{t>0} \frac{v(td)+td}{t} = \sup_{t>0} \frac{v(td)-v(0)}{t} + d = v_\infty(d) + d \ \forall d \in \R.
\end{equation}
Thus condition \eqref{cond} is nothing else than asking that $\{d\in\R:s_{\infty}(d)=0\}=\{0\}$. On the other hand, from \eqref{condinf} it follows that $s_{\infty}(d) \geq 0$ for all $d \in \R$. By taking into account \cite[Theorem 3.2.1.]{teboulle-carte} we get that $0\in\ri(\dom s^{*})=\ri(\dom v^{*}+1)$.

Further, we notice that, by taking $f:L^q \rightarrow \overline{\R}$, $f(\Xi) = \mathbb{E}(v^{*}(\Xi))-\langle X, \Xi \rangle$ and $g:L^q \rightarrow \R$,
$g(\Xi) = \mathbb{E}(\Xi)+1$, which are both convex functions, the qualification condition $(QC_3)$ is fulfilled. Indeed, $f$ is lower semicontinuous, $g$ is continuous and $0\in \ri(\dom v^{*}+1) = \sqri(\mathbb{E}(\dom v^{*})+1)$. Thus the existence of strong duality for \eqref{primala}-\eqref{duala} and, consequently, of an optimal solution for \eqref{duala} is shown.
\end{proof}

Next we provide a formula for the subdifferential of the general convex risk measure $\rho_v$.

\begin{theorem}\label{thsubdiff}
Assume that condition \eqref{cond} is fulfilled. Let $X \in L^p$ and $\bar \lambda(X) \in \R$ be the element where the infimum in the definition of
$\rho_v(X)$ is attained. Then it holds
\begin{equation}\partial\rho_{v}(X)=\{X^{*}\in (L^{p})^{*}: X^{*}(\omega) \in\partial v(X(\omega) + \bar \lambda(X)) \ \mbox{for almost every} \ \omega \in \Omega, \mathbb{E}(X^{*})=-1\}.\label{subdiff}
\end{equation}
\end{theorem}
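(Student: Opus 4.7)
The plan is to derive the subdifferential formula from the characterization \eqref{subdiff-conjug-zero}, which says that $X^{*}\in\partial\rho_{v}(X)$ if and only if $\rho_{v}(X)+\rho_{v}^{*}(X^{*})=\langle X^{*},X\rangle$. Since $\rho_{v}(X)\in\R$ (by properness plus the inequality $\rho_{v}(X)\ge-\mathbb{E}(X)$ noted after \eqref{OCE}), this equivalence is directly applicable without any lower semicontinuity hypothesis on $\rho_{v}$.

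First I would plug in the conjugate formula from Lemma~\ref{conjug_oce}: the equality $\rho_{v}(X)+\rho_{v}^{*}(X^{*})=\langle X^{*},X\rangle$ forces $\rho_{v}^{*}(X^{*})$ to be finite, hence $\mathbb{E}(X^{*})=-1$, and reduces to
\[
\mathbb{E}(v^{*}(X^{*}))+\rho_{v}(X)=\langle X^{*},X\rangle.
\]
Next, since condition \eqref{cond} holds, Theorem~\ref{thexist} guarantees the existence of $\bar\lambda(X)\in\R$ with $\rho_{v}(X)=\bar\lambda(X)+\mathbb{E}(v(X+\bar\lambda(X)))$. Substituting this and using $\mathbb{E}(X^{*})=-1$, so that $\bar\lambda(X)=-\bar\lambda(X)\mathbb{E}(X^{*})$, the displayed equality rewrites as
\[
\mathbb{E}\bigl[v^{*}(X^{*})+v(X+\bar\lambda(X))-X^{*}(X+\bar\lambda(X))\bigr]=0.
\]

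The key observation is now that the Young-Fenchel inequality applied pointwise to the scalar function $v$ gives
\[
v^{*}(X^{*}(\omega))+v(X(\omega)+\bar\lambda(X))-X^{*}(\omega)(X(\omega)+\bar\lambda(X))\ge 0
\]
for almost every $\omega\in\Omega$. Therefore an integrand that is a.e.\ nonnegative has zero expectation if and only if it vanishes almost everywhere. Invoking \eqref{subdiff-conjug-zero} once more, this time applied pointwise to the function $v$ at the point $X(\omega)+\bar\lambda(X)$, the vanishing of the integrand is equivalent to $X^{*}(\omega)\in\partial v(X(\omega)+\bar\lambda(X))$ for almost every $\omega\in\Omega$. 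This establishes the inclusion ``$\subseteq$'' in \eqref{subdiff}, and the converse inclusion is obtained by running the same chain of equivalences backwards: the pointwise subdifferential relations turn the Young-Fenchel inequalities into equalities a.e., integration (together with $\mathbb{E}(X^{*})=-1$) yields the conjugate identity, and hence $X^{*}\in\partial\rho_{v}(X)$.

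I do not expect a genuine obstacle here; the argument is essentially a bookkeeping exercise around \eqref{subdiff-conjug-zero} combined with Theorem~\ref{thexist}. The only subtle point worth a sentence is the measurability issue implicit in writing ``$X^{*}(\omega)\in\partial v(X(\omega)+\bar\lambda(X))$ a.e.'': since $X^{*}\in(L^{p})^{*}$ is represented by a measurable function (an element of $L^{q}$ when $p<\infty$, or via the finitely additive representation when $p=\infty$), and since $v$ is proper convex lower semicontinuous on $\R$, this condition is meaningful and is preserved a.e., which is precisely what is needed to pass between the integrated and pointwise forms of the Young-Fenchel equality.
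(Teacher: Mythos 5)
Your proposal is correct and follows essentially the same route as the paper's proof: both reduce membership in $\partial\rho_{v}(X)$ to the conjugate equality \eqref{subdiff-conjug-zero}, insert the formula from Lemma~\ref{conjug_oce} and the attained value $\rho_{v}(X)=\bar\lambda(X)+\mathbb{E}(v(X+\bar\lambda(X)))$, and then use the pointwise Young--Fenchel inequality to convert the vanishing expectation into the a.e.\ pointwise subdifferential condition. The only additions on your side are the (harmless) explicit remark on measurability and the separate treatment of the two inclusions, which the paper obtains in one chain of equivalences.
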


\begin{proof} We fix an $X \in L^p$ and let $\bar \lambda(X) \in \R$ be such that $\rho_v(X) = \bar \lambda(X) +\mathbb{E}(v(X+\bar \lambda(X)))$. Then, via \eqref{subdiff-conjug-zero},
$$X^* \in \partial\rho_{v}(X) \Leftrightarrow \rho_{v}^*(X^*) + \rho_{v}(X) = \langle X^*,X\rangle$$
or, equivalently, (see Lemma \ref{conjug_oce})
$$\mathbb{E}(X^{*})=-1 \ \mbox{and} \ \mathbb{E}(v^{*}(X^{*}) + v(X+\bar \lambda(X)) - \langle X^*,X + \bar \lambda(X)\rangle) = 0.$$
On the other hand, by the Young-Fenchel inequality, it holds
$$v^*(X^*(\omega)) + v(X(\omega) + \bar \lambda(X)) -  X^*(\omega)(X(\omega) + \bar \lambda(X)) \geq 0 \ \forall \omega \in \Omega,$$
which means that $\mathbb{E}(v^{*}(X^{*}) + v(X+\bar \lambda(X)) - \langle X^*,X + \bar \lambda(X)\rangle) = 0$ is nothing else than
$v^*(X^*(\omega)) + v(X(\omega) + \bar \lambda(X)) -  X^*(\omega)(X(\omega) + \bar \lambda(X)) = 0$ for almost every $\omega \in \Omega$. In
conclusion, $X^* \in \partial\rho_{v}(X)$ if and only if
$$\mathbb{E}(X^{*})=-1 \ \mbox{and} \ X^{*}(\omega) \in\partial v(X(\omega) + \bar \lambda(X)) \ \mbox {for almost every} \ \omega \in \Omega.$$
\end{proof}

In the sequel we rediscover for particular choices of the utility function $v$ several well-known convex risk measures and provide formulae for their conjugates and subdifferentials.

\subsection{Conditional value-at-risk (CVaR)}\label{sub21}

For $\gamma_{2}<-1<\gamma_{1}\leq 0$ we consider the utility function $v_{1}:\R\to\R$ defined by
$$v_{1}(t)=\left\{
\begin{array}{ll}
\gamma_{2}t, & \mbox {if } t\leq 0,\\
\gamma_{1}t, & \mbox {if } t> 0,
\end{array}\right.$$
and notice that it satisfies all the requirements in the Assumption. This gives rise to the following convex risk measure $\rho_{v_1} : L^p \rightarrow \R$,
\begin{equation*}
\rho_{v_{1}}(X) = \inf_{\substack{\lambda\in\R}}\{\lambda+\gamma_{1}\mathbb{E}(X+\lambda)_{+}-\gamma_{2}\mathbb{E}(X+\lambda)_{-}\}.
\end{equation*}
Since $v_1^* = \delta_{[\gamma_2,\gamma_1]}$, via Lemma \ref{conjug_oce} one gets for $\rho_{v_{1}}^* : (L^p)^* \rightarrow \B$ the following expression
\begin{equation*}
\rho_{v_1}^{*}(X^*)=\left\{
\begin{array}{ll}
0, & \mbox {if} \  \gamma_2 \leq X^* \leq \gamma_1, \mathbb{E}(X^{*})=-1,\\
+\infty, & \mbox{otherwise}.
\end{array}\right.
\end{equation*}
Noticing that for all $d \in \R$,
\begin{equation*}
(v_1)_\infty(d)=\left\{
\begin{array}{ll}
\gamma_2d, & \mbox {if} \  d < 0,\\
0, & \mbox {if} \  d = 0,\\
\gamma_1d, & \mbox {if} \  d > 0,
\end{array}\right.
\end{equation*}
one can easily see that condition \eqref{cond} is satisfied. Thus for all $X \in L^p$ there exists $\bar \lambda(X) \in \R$ such that
$\rho_{v_{1}}(X) = \bar \lambda(X) +\gamma_{1}\mathbb{E}(X+ \bar \lambda(X))_{+}-\gamma_{2}\mathbb{E}(X+ \bar \lambda(X))_{-}$. Further, according to Theorem \ref{thsubdiff}, we will make use of $\bar \lambda(X)$ when giving the formula for the subdifferential of $\rho_{v_{1}}$ at $X$. Since
$$
\partial v_{1}(t)=\left \{
\begin{array}{ll}
\{\gamma_{2}\}, & \mbox{if} \ t<0,\\{}
[\gamma_2, \gamma_1], & \mbox{if} \ t=0,\\
\{\gamma_{1}\}, & \mbox{if} \ t>0,\\
\end{array} \right.
$$
via \eqref{subdiff} we obtain for all $X \in L^p$ the following formula
\begin{equation}\label{subdifg}
\partial \rho_{v_1}(X)=\left\{X^{*}\in (L^{p})^{*}: \mathbb{E}(X^{*})=-1,
\begin{array}{ll}
X^*(\omega) = \gamma_2, & \mbox {if } X(\omega)<-\bar \lambda(X),\\
X^*(\omega) \in [\gamma_2,\gamma_1], & \mbox {if } X(\omega)=-\bar \lambda(X),\\
X^*(\omega) = \gamma_1, & \mbox {if } X(\omega)>-\bar \lambda(X)
\end{array}\right\}.
\end{equation}

When $\gamma_{1}=0$ and $\gamma_{2}=-1/\beta,$ where $\beta\in(0,1)$, the convex risk measure $\rho_{v_1}$ turns out to be the classical
so-called \emph{conditional value-at-risk} (see, for instance, \cite{Rock-cvar,Rock-cvar2}), $\cvar_\beta : L^p \rightarrow \R$,
\begin{equation}
\cvar\nolimits_{\beta}(X)=\inf_{\substack{\lambda\in\R}}\left \{\lambda+\frac{1}{\beta}\mathbb{E}[(X+\lambda)_{-}] \right\}.\label{Min-cvar}
\end{equation}

Thus, for all $X^* \in (L^p)^*$ its conjugate function $\cvar_\beta^* : (L^p)^* \rightarrow \B$ looks like
\begin{equation*}
\cvar\nolimits_{\beta}^{*}(X^*)=\left\{
\begin{array}{ll}
0, & \mbox {if} \  -\frac{1}{\beta} \leq X^* \leq 0, \mathbb{E}(X^{*})=-1,\\
+\infty, & \mbox{otherwise}.
\end{array}\right.
\end{equation*}
For all $X \in L^p$ the element where the infimum in the definition of $\cvar\nolimits_{\beta}(X)$ is attained, is the so-called \emph{value-at-risk of $X$ at level $\beta$},
$$\var\nolimits_\beta(X)=-\inf\{\alpha : \mathbb{P}(X \leq \alpha) > \beta\}.$$
This fact, along with \eqref{subdifg}, furnishes for all $X \in L^p$ the following formula for the subdifferential of the conditional value-at-risk
\begin{equation*}
\partial \cvar\nolimits_{\beta}(X)=\left\{\!X^{*}\in (L^{p})^{*}\!: \!\mathbb{E}(X^{*})=-1,\!\!\!
\begin{array}{ll}
X^*(\omega) = -1/\beta, & \mbox {if } X(\omega)<-\var_\beta(X),\\
X^*(\omega) \in [-1/\beta,0], & \mbox {if } X(\omega)=-\var_\beta(X),\\
X^*(\omega) = 0, & \mbox {if } X(\omega)>-\var_\beta(X)
\end{array}\!\!\!\!\right\}.
\end{equation*}

For alternative approaches for deriving the formula of the subdifferential of the conditional value-at-risk we refer to \cite{Rock-optim_cond, Rusz}.

\subsection{Entropic risk measure}\label{sub22}

Consider the utility function $v_{2}:\R\to\R$, $v_{2}(t)=\exp(-t)-1$, which obviously fulfills the hypotheses in the Assumption.
The convex risk measure we define via $v_2$ is $\rho_{v_{2}} : L^p \rightarrow \R$,
$\rho_{v_{2}}(X)=\inf_{\substack{\lambda\in\R}}\{\lambda+\mathbb{E}(\exp(-X-\lambda)-1)\}$. With the convention $0 \ln(0) = 0$ we have
for all $t^* \in \R$ that
\begin{equation*}
v_{2}^{*}(t^{*})=\left\{
\begin{array}{ll}
-t^*\ln(-t^{*})+t^{*}+1, & \mbox {if } t^{*} \leq 0,\\
+\infty, & \mbox {if } t^{*}>0,
\end{array}\right.
\end{equation*}
and, so, from Lemma \ref{conjug_oce} it follows that for all $X^* \in (L^p)^*$ one has
\begin{equation*}
\rho_{v_{2}}^{*}(X^{*})=\left\{
\begin{array}{ll}
-\mathbb{E}(X^*\ln(-X^{*})), & \mbox {if } X^{*}<0,\  \mathbb{E}(X^{*})=-1,\\
+\infty, & \mbox {otherwise}.
\end{array}\right.
\end{equation*}
Since $(v_2)_\infty = \delta_{[0,+\infty)}$, condition \eqref{cond} is fulfilled and for all $X \in L^p$ there exists $\bar \lambda (X) \in \R$ such that
the infimum in the definition of $\rho_{v_{2}}(X)$ is  attained at this point. But in this special case one can easily see that $\bar \lambda (X) = \ln(\mathbb{E}(\exp(-X))$ and therefore the risk measure can be equivalently written as $\rho_{v_{2}}(X)=\ln(\mathbb{E}(\exp(-X)))$. This is the so-called
\emph{entropic risk measure} introduced and investigated in \cite{Barrieu}.

Noticing that $\partial v_{2}(t)=\{\nabla v_{2}(t)\}=\{-\exp(-t)\}$ for all $t \in \R$, the subdifferential of the entropic risk measure at $X \in L^p$ is
$\partial\rho_{v_{2}}(X) = \{\nabla \rho_{ v_{2}}(X)\} =  \left\{\frac{-1}{\mathbb{E}(\exp(-X))}\exp(-X)\right\}$.

\subsection{The worst-case risk measure}\label{sub23}

By taking as utility function $v_3 = \delta_{[0, +\infty)}$ one rediscovers under $\rho_{v_3} : L^p \rightarrow \R \cup \{+\infty\}$,
\begin{equation}\rho_{v_{3}}(X)=\inf_{\substack{\lambda\in\R\\ X+\lambda \geq0}}\lambda =-\esinf X,\label{esinf}
\end{equation}
the so-called \emph{worst-case risk measure}. As $v_3^* = \delta_{(-\infty,0]}$, we have for all $X^* \in (L^p)^*$ that
$$\rho_{v_3}^{*}(X^{*})=\left\{
\begin{array}{ll}
0, & \mbox {if } X^{*}\leq0,\ \mathbb{E}(X^{*})=-1,\\
+\infty, & \mbox{otherwise}.
\end{array}\right.
$$
Noticing that $(v_3)_\infty =  \delta_{[0, +\infty)}$, one can easily see that \eqref{cond} is fulfilled, which means that for all
$X \in L^p$ there exists $\bar \lambda(X) \in \R$ at which the infimum in \eqref{esinf} is attained. If $\esinf X = -\infty$, then
one can take $\bar \lambda(X)$ arbitrarily in $\R$, while, when $\esinf X \in \R$, $\bar \lambda(X) = -\esinf X$. Since
$$
\partial v_{3}(t)=\left \{
\begin{array}{ll}
\emptyset, & \mbox{if} \ t<0,\\{}
(-\infty,0], & \mbox{if} \ t=0,\\
\{0\}, & \mbox{if} \ t>0,\\
\end{array} \right.
$$
we can provide via Theorem \ref{thsubdiff} the formula for the subdifferential of the worst-case risk measure. Indeed, for
$X \in L^p$ with $\esinf X = -\infty$ one has $\partial \rho_{v_3}(X) = \emptyset$, while, if $\esinf X \in \R$, it holds
\begin{equation*}
\partial\rho_{v_{3}}(X)=\left\{X^{*}\in (L^{p})^{*}: \mathbb{E}(X^{*})=-1,
\begin{array}{ll}
X^*(\omega) \in (-\infty,0], & \mbox {if } X(\omega) = \esinf X,\\
X^*(\omega) = 0, & \mbox {if } X(\omega) > \esinf X
\end{array}
\right\}.
\end{equation*}

\section{Dual representations of\! monotone and \!cash-invariant hulls}\label{sec3}

Throughout the economical literature one finds a vast variety of risk
functions, along the coherent and convex ones some very irregular ones, which are
neither monotone nor cash-invariant being present, too. In order to overcome the lack
of monotonicity or cash-invariance and to provide better tools for
quantifying risk, Filipovi\'{c} and Kupper have proposed in \cite{filipovic} the notions of \emph{monotone} and \emph{cash-invariant hulls},
which are the greatest monotone and, respectively, cash-invariant functions majorized by the risk function in discussion.
For the majority of the examples treated in \cite{filipovic} these hulls are not given in their initial formulation, but tacitly some dual representations
of them are used.

In this section we show that these dual representations are nothing else than the dual problems of the primal optimization problems hidden in the definition of the monotone and cash-invariant hulls and formulate sufficient qualification conditions for the existence of strong duality. This is the premise for making the dual representations viable. Finally, we discuss the examples from \cite{filipovic} and show that for those particular situations the qualification conditions are automatically fulfilled, fact which permits the formulation of \emph{refined} dual representations.

For the beginning we work in the general setting of a separated locally convex vector space ${\cal X}$ with ${\cal X}^*$ its
topological dual space. Further, let $\mathcal{P}$ be a nonempty convex closed cone in ${\cal X}$, $\Pi \in {\cal X} \setminus \{0\}$ and $f : {\cal X} \rightarrow \B$ a proper function. The following notions have been introduced in \cite{filipovic} having as a starting point the corresponding ones in the
definition of a convex risk measure.

\begin{definition}\label{defsec41}
The function $f$ is called:
\begin{enumerate}
\item[(i)] $\mathcal{P}$-\mbox{\emph{monotone}, if}: $x\geq_{\mathcal{P}}y \Rightarrow f(x) \leq f(y) \ \forall x,y \in {\cal X}$;
\item[(ii)] $\Pi$-\mbox{\emph{invariant}, if}: $f(x+a\Pi)=f(x)-a \ \forall x \in {\cal X} \ \forall a \in \R$.
\end{enumerate}
\end{definition}
If ${\cal X}=L^p$, $\mathcal{P} = L^p_+$ and $\Pi=1$, then one rediscovers in the definition above the monotonicity and cash-invariance, respectively,
as introduced in Definition \ref{masura convexa}.

Before introducing the following notions we consider the set $\mathcal{D}:=\{x^{*}\in {\cal X}^{*}:\langle x^{*},\Pi\rangle=-1\}$ and notice that for
the conjugate of the indicator function of $\mathcal{D}$ we have (see, for instance, \cite[Lemma 3.3]{filipovic}) for all $x \in {\cal X}$ that
$$\delta^{*}_{\mathcal{D}}(x) = \sup_{x^* \in \mathcal{D}} \langle x^*,x \rangle = \left \{\begin{array}{ll}
-a, & \mbox{if} \ \exists a \in \R \ \mbox{such that} \ x=a\Pi,\\
+\infty, & \mbox{otherwise}.
\end{array} \right.$$
This means that $\dom \delta^{*}_{\mathcal{D}} = \R\Pi:=\cup_{a \in \R} a\Pi$.

\begin{definition}\label{defsec42}
For the given function $f$ we call
\begin{enumerate}
\item[(i)] \emph{$\mathcal{P}$-monotone hull} of $f$ the function $f_{\mathcal{P}}:{\cal X} \rightarrow \B$ defined as
$$f_{\mathcal{P}}(x):=f\square\delta_{\mathcal{P}}(x)=\inf\{f(y): y \in {\cal X}, x\geq_{\mathcal{P}}y\}$$

\item[(ii)]\emph{$\Pi$-invariant hull} of $f$ the function $f_{\Pi}:{\cal X} \rightarrow \B$ defined as
$$f_{\Pi}(x):=f \square \delta^{*}_{\mathcal{D}}(x) = \inf_{a\in\R}\{f(x-a\Pi)-a\}.$$

\item[(iii)]\emph{$\mathcal{P}$-monotone $\Pi$-invariant hull} of $f$ the function $f_{\mathcal{P},\Pi} : {\cal X} \rightarrow \B$ defined as
$$f_{\mathcal{P},\Pi}(x):=f\square\delta_{\mathcal{P}}\square\delta^{*}_{\mathcal{D}}(x) = \inf \{f(y)-a: y \in {\cal X}, a \in \R, x\geq_{\mathcal{P}} y+ a\Pi\}.$$
\end{enumerate}
\end{definition}
Obviously, $\dom f_{\mathcal{P}} = \dom f + {\mathcal{P}}$, $\dom f_{\Pi} = \dom f + \R\Pi$ and $\dom f_{\mathcal{P},\Pi} =
\dom f + {\mathcal{P}} + \R\Pi$. Moreover, $f$ is ${\mathcal{P}}$-monotone if and only if $f = f_{\mathcal{P}}$, while
$f$ is $\Pi$-invariant if and only if $f = f_\Pi$.

In the following we assume that $f$ is a proper and convex function and provide a dual representation for $f_{\mathcal{P},\Pi}$ by making use of the convex duality theory. This approach is based on the observation that the value of the $\mathcal{P}$-monotone $\Pi$-invariant hull at a given point is nothing else than the optimal objective value of a convex optimization problem. Let $x \in \dom f + {\mathcal{P}} + \R\Pi$ be fixed and consider the following primal problem having as optimal objective value $f_{\mathcal{P},\Pi}(x)$
\begin{equation}
\inf_{\substack{y \in {\cal X}, a \in \R\\y+a\Pi - x \in -\mathcal{P}}} f(y)-a.\label{primalaPPi}\end{equation}
Its Lagrange dual problem looks like
$$\sup_{\substack{x^{*}\in \mathcal{P}^{*}}}\inf_{\substack{y\in {\cal X}, a \in \R}} \{f(y)-a+\langle x^{*},y+a\Pi-x\rangle\}$$
or, equivalently,
$$\sup_{\substack{x^{*}\in \mathcal{P}^{*}}}\left\{\inf_{\substack{a\in\R}} \{a(\langle x^{*},\Pi\rangle-1)\}+\inf_{\substack{y\in {\cal X}}}\{\langle x^{*},y\rangle+f(y)\}-\langle x^{*},x\rangle\right\}.$$
Since $\inf_{a \in\R}\{a(\langle x^{*},\Pi\rangle-1)\}=-\delta_{\{0\}}(\langle x^{*},\Pi\rangle-1)$, the dual
problem becomes
\begin{equation}
\sup_{\substack{x^{*}\in -\mathcal{P}^{*}\\ \langle x^{*},\Pi\rangle=-1}}\{\langle x^{*},x\rangle - f^{*}(x^{*})\}.
\label{dualaP-PI}
\end{equation}
This means that if one is able to guarantee strong duality for the primal-dual pair \eqref{primalaPPi}-\eqref{dualaP-PI}, then one has
\begin{equation}\label{dualrep}
f_{\mathcal{P},\Pi}(x) = \max_{\substack{x^{*}\in -\mathcal{P}^{*}\\ \langle x^{*},\Pi\rangle=-1}}\{\langle x^{*},x\rangle - f^{*}(x^{*})\},
\end{equation}
where by the use of $\max$ instead of $\sup$ we signalize the fact that the supremum is attained.

\begin{remark}\label{remppi}
If $f$ is $\mathcal{P}$-monotone, then $f = f\square\delta_{\mathcal{P}}$, which means that $f^* = f^* + \delta_{-\mathcal{P}^{*}}$. In this situation one would get for
$f_{\mathcal{P},\Pi}(x) = f_{\Pi}(x)$ the following dual representation
\begin{equation}
\sup_{\substack{\langle x^{*},\Pi\rangle=-1}}\{\langle x^{*},x\rangle - f^{*}(x^{*})\}.
\label{dualaP-P}
\end{equation}
On the other hand, if $f$ is $\Pi$-invariant, then $f = f \square \delta^{*}_{\mathcal{D}}$, which means that $f^* = f^* + \delta_{\mathcal{D}}$. In this situation one would get for
$f_{\mathcal{P},\Pi}(x) = f_{\mathcal{P}}(x)$ the following dual representation
\begin{equation}
\sup_{\substack{x^{*}\in -\mathcal{P}^{*}}}\{\langle x^{*},x\rangle - f^{*}(x^{*})\}.
\label{dualaP-PPI}
\end{equation}

\end{remark}

Next we investigate and discuss the existence of strong duality for the primal-dual pair \eqref{primalaPPi}-\eqref{dualaP-PI}. Since $g:{\cal X}\times\R\to {\cal X}$,
$g(y,a)=y+a\Pi-x,$ is an affine and continuous function,  one could try to guarantee to this aim that one of the qualification conditions
(see $(QC_1)$)
\begin{equation}\label{qcsec41}
\exists (y',a') \in \dom f \times \R \ \mbox{such that} \ y'+a'\Pi-x \in -\inte \mathcal{P}
\end{equation}
and (see $(QC_3)$)
\begin{equation}\label{qcsec42}
{\cal X} \mbox{ is} \ \mbox{a Fr\'echet space}, f \mbox{ is lower
semicontinuous and } x \in\sqri(\dom f + \R\Pi + \mathcal{P})
\end{equation}
is fulfilled. The qualification condition of quasi-relative interior-type $(QC_4)$ will be studied in the next section.

In the following we investigate the verifiability of these qualification conditions in the context of risk measure theory, namely by assuming that ${\cal X}=L^p$ and $\mathcal{P}=L^p_+$ for $p \in [1,\infty]$.  Working in this framework, one can easily see that the qualification condition
\eqref{qcsec41} is for $p \in [1,\infty)$ not verified, $L^p_+$ having an empty interior; therefore we will concentrate ourselves first on condition \eqref{qcsec42} and assume to this end that \emph{$f$ is lower semicontinuous}. A situation when $f$ fails to have this topological property will be addressed in the next section.

Thus, in order to guarantee the dual representation \eqref{dualrep} for $X \in \dom f + L^p_+ + \R\Pi$, one could ensure that
\begin{equation}\label{qcsecsp40}
X \in \sqri(\dom f + \R\Pi + L^p_+).
\end{equation}
This is the case when
\begin{equation}\label{qcsecsp41}
-L^p_+ \subseteq \dom f,
\end{equation}
but also when
\begin{equation}\label{qcsecsp42}
p=\infty \ \mbox{and} \ \esinf \Pi \cdot \essup \Pi >0.
\end{equation}

Indeed, when \eqref{qcsecsp41} holds, then $\dom f + \R\Pi + L^p_+ = L^p$ and \eqref{qcsecsp40} is valid. Suppose now that the second condition is true, namely $p=\infty$ and $\esinf \Pi \cdot \essup \Pi > 0$. In this situation the equality $\dom f + \R\Pi + L^\infty_+ = L^\infty$ holds, too. Assume that either $\esinf \Pi > 0$ or $\essup \Pi < 0$ and take an arbitrary $Z \in L^\infty$. Let be $Y \in \dom f$. Then there exists $a \in \R$ such that $(Z-Y) + a\Pi \in L^\infty_+$ and so $Z \in \dom f + \R\Pi + L^\infty_+$. Thus \eqref{qcsecsp42} is another sufficient condition for \eqref{qcsecsp40}. One can notice that the assumption made on $\Pi$ in the second condition is fulfilled when $\Pi \in L^\infty$ is a \emph{constant numeraire}.

It is also worth mentioning that condition \eqref{qcsecsp42} is sufficient for \eqref{dualrep}, even without assuming lower semicontinuity for $f$, since in the singular case $p=\infty$ the ordering cone has a nonempty interior. As $X-Y-a\Pi \in L^\infty_+$ for some $Y \in \dom f$ and $a \in \R$, under \eqref{qcsecsp42}, one can guarantee the existence of $a' \in \R$ such that $X-Y-a'\Pi \in \inte L^\infty_+ = \{Z \in L^\infty: \esinf Z > 0\}$. Thus, via \eqref{qcsec41}, the dual representation for the $\mathcal{P}$-monotone $\Pi$-invariant hull of $f$ is valid.

In the last part of this section we discuss the examples treated in \cite{filipovic} from this new perspective given by the duality theory,
investigate the fulfillment of the conditions \eqref{qcsecsp41} and \eqref{qcsecsp42} and
provide some refined dual representations for the risk functions in discussion. We will use the notion \emph{monotone} for \emph{$L^p_+$-monotone} and
\emph{cash-invariant} for \emph{$1$-invariant}. The same applies when we speak about the corresponding hulls.

\begin{example}\label{ex41}
For $p \in [1,\infty)$ and $c > 0$ consider the \emph{$L^{p}$ deviation risk measure} $f:L^{p}\to\R$ defined by
$f(X)=c\|X-\mathbb{E}(X)\|_{p}-\mathbb{E}(X)$. This is a convex, continuous and cash-invariant ($\Pi =1$) risk function, but not monotone in general.
For the conjugate formula of the $L^{p}$ deviation risk measure we refer to \cite{Bot-Lorenz}. This is for $X^* \in L^q$ given by
$$f^{*}(X^*)=\left\{
\begin{array}{ll}
0, & \mbox {if } \exists Y^{*}\in L^{q} \ \mbox{such that} \ c( Y^{*}-\mathbb{E}(Y^{*}))-1=X^{*},\ \|Y^{*}\|_{q}\leq1,\\
+\infty, & \mbox{otherwise}.
\end{array}\right.$$
As $\dom f = L^p$, \eqref{qcsecsp41} is valid and thus the \emph{monotone hull} of $f$ looks for all $X \in L^p$ like (see also Remark \ref{remppi})
\begin{equation*}
f_{L^{p}_{+},1}(X) = f_{L^{p}_{+}}(X)=\max_{\substack{\|Y^*\|_{q}\leq 1\\ c(Y^{*}-\mathbb{E}(Y^{*})) \leq 1}} c[\mathbb{E} (Y^{*})\mathbb{E}(X)-\mathbb{E} (Y^{*}X)] - \mathbb{E}(X).
\end{equation*}
In this way we rediscover the formula given in \cite[Subsection 5.1]{filipovic}.
\end{example}

\begin{example}\label{ex42}
Closely related to previous example we consider for $p \in [1,\infty)$ and $c > 1$ the
\emph{$L^{p}$ semi-deviation risk measure} $f:L^{p}\to\R$ defined
as $f(X)=c\|(X-\mathbb{E}(X))_{-}\|_{p}-\mathbb{E}(X)$. This is a convex, continuous and cash-invariant ($\Pi =1$) risk function, but not monotone in general.
For its conjugate function we have for $X^* \in L^q$ the following formula (see \cite{Bot-Lorenz})
$$f^{*}(X^*)=\left\{
\begin{array}{ll}
0, & \mbox {if } \exists Y^{*}\in -L^{q}_{+} \ \mbox{such that} \  c(Y^{*}-\mathbb{E}(Y^{*}))-1=X^{*},\ \|Y^{*}\|_{q}\leq1,\\
+\infty, & \mbox{otherwise}.
\end{array}\right.$$
Consequently, since \eqref{qcsecsp41} is valid, the \emph{monotone hull} of $f$ is for all $X \in L^p$ given by (see also \cite[Subsection 5.2]{filipovic})
\begin{equation*}
f_{L^{p}_{+},1}(X) = f_{L^{p}_{+}}(X)=\max_{\substack{ Y^* \in L^q_+, \|Y^*\|_{q}\leq 1\\ c(Y^{*}-\mathbb{E}(Y^{*})) \leq 1}} c[\mathbb{E} (Y^{*})\mathbb{E}(X)-\mathbb{E} (Y^{*}X)] - \mathbb{E}(X).
\end{equation*}
\end{example}

\begin{example}\label{ex43}
For $p\in[1,\infty)$ and $c > 0$ consider the \emph{mean-$L^p$ risk measure}
$f:L^{p}\to\R$ defined as $f(X)=c/p\mathbb{E}(|X|^{p})-\mathbb{E}(X) = c/p \|X\|_p^p - \mathbb{E}(X)$, which is a convex and continuous
risk function but neither monotone nor cash-invariant ($\Pi=1$). Its conjugate function can be easily derived from
\cite{Bot-Lorenz} and for $X^* \in L^q$ it looks like
$$f^{*}(X^{*})=\frac{p-1}{pc^\frac{1}{p-1}}\mathbb{E}(|X^*+1|^q).$$
Again, $\dom f = L^p$, which means that the \emph{monotone cash-invariant hull} of $f$ has for all $X \in L^p$ the following formulation
\begin{equation*}
f_{L^{p}_+,1}(X)=\max_{\substack{X^{*}\in
-L^{q}_{+}\\\mathbb{E}(X^*)=-1}}\mathbb{E}\left[X^*X -\frac{1}{c^{q-1}q} |X^*+1|^q\right].
\end{equation*}
Different to the approach in \cite[Subsection 5.3]{filipovic}, the use of the strong duality theory allows us to guarantee the attainment of the supremum in the formula above.
\end{example}

\begin{example}\label{ex44}
For $p\in[1,\infty)$ and $c>0$ consider now the \emph{$L^{p}$ semi-moment risk measure} $f:L^{p}\to\R$ defined as $f(X)=1/c\mathbb{E}[(X_{-})^p] = 1/c\|X_{-}\|_p^p$,
which is a convex, continuous and monotone risk function, but not cash-invariant ($\Pi=1$).
Its conjugate function is for $X^* \in L^q$ given by  (see \cite{Bot-Lorenz})
$$f^{*}(X^{*})=\left\{
\begin{array}{ll}
\frac{p-1}{c}\left \|\frac{c}{p}X^{*}\right \|_{q}^{q}, & \mbox {if } X^{*}\in -L^{q}_{+},\\
+\infty, & \mbox{otherwise}.
\end{array}\right.$$
Since $\dom f = L^p$, the \emph{cash-invariant hull} of $f$ has for all $X \in L^p$ the following formulation (see also Remark \ref{remppi})
$$f_{L^{p}_{+},1}(X)= f_1(X) = \max_{\substack{X^{*}\in
-L^{q}_{+}\\\mathbb{E}(X^{*})=-1}}\left\{\mathbb{E}(X^*X) -
\frac{p-1}{c}\left\|\frac{c}{p}X^{*}\right\|_{q}^{q}\right\}.$$
\end{example}

\begin{example}\label{ex45}
The next risk function we consider is the \emph{exponential risk measure} defined for $p \in [1,\infty]$ as being
$f : L^p \rightarrow \R$,  $f(X)=\mathbb{E}(\exp(-X))-1$. This is a convex, continuous and monotone, but not cash-invariant $(\Pi=1)$ risk function.
The conjugate function of $f$ is for $X^* \in (L^p)^*$ given by
$$f^{*}(X^{*})=\sup_{X\in L^{p}}\{\langle X^*,X \rangle-\mathbb{E}(\exp(-X))+1\},$$
which by the interchangeability of minimization and
integration (see \cite[Theorem 14.60]{Rock-Wets}) becomes (we make use again of the convention $0\ln(0) =0$)
$$\mathbb{E}\left \{\sup_{x\in\R}\{X^*x-\exp(-x)+1\} \right\} =\left\{
\begin{array}{ll}
\mathbb{E}[-X^{*}\ln(-X^*)+X^*]+1, & \mbox{if} \  X^* \leq 0,\\
+\infty, & \mbox{otherwise}.
\end{array}\right.$$
Consequently, one obtains via Remark \ref{remppi} for all $X \in L^p$ the following representation for the \emph{cash-invariant hull} of $f$
$$f_{L^{p}_{+},1}(X)= f_1(X) = \max_{\substack{X^{*}\in
(L_+^p)^{*}\\\mathbb{E}(X^{*})= 1}}\mathbb{E}[-X^*X - X^{*}\ln(X^*)].$$
\end{example}

\begin{example}\label{ex46}
For $p=\infty$ the so-called \emph{logarithmic risk
measure} $f:L^\infty \rightarrow \B$,
$$f(X)=\left\{
\begin{array}{ll}
\mathbb{E}(-\ln(X))-1, & \mbox {if } X>0,\\
+\infty, & \mbox {otherwise},
\end{array}\right.$$
is a convex, lower semicontinuous and monotone risk function which fails to be cash-invariant ($\Pi=1$).
Its conjugate function is given for $X^* \in (L^\infty)^*$ by
\begin{eqnarray*}
f^{*}(X^{*})=\sup_{\substack{X > 0}}\{\langle X^*,
X \rangle+\mathbb{E}(\ln(X)+1)\}\
\end{eqnarray*}
and can be further calculated by using \cite[Theorem 14.60]{Rock-Wets}. Indeed, one has
$$
f^{*}(X^{*})=\mathbb{E}\left\{\sup_{\substack{x > 0}}\{X^{*}x+\ln(x)+1\}\right\}=\left\{
\begin{array}{ll}
-\mathbb{E}(\ln(-{X^*})), & \mbox {if } X^*<0,\\
+\infty, & \mbox{otherwise}.
\end{array}\right.
$$
Before giving a dual representation for the cash-invariant hull of the logarithmic risk measure, one should notice that we are now in a situation where
\eqref{qcsecsp41} fails, but \eqref{qcsecsp42} is valid. Consequently, the \emph{cash-invariant hull} of $f$ can be for all
$X \in L^\infty$ given by
\begin{equation*}
f_{L^{\infty}_{+},1}(X)= f_1(X) = \max_{\substack{X^* \in (L^\infty)^*, X^{*} > 0
\\ \mathbb{E}(X^*)= 1}}\mathbb{E}[-X^*X + \ln(X^*)].
\end{equation*}
\end{example}

\section{The situation of missing lower semicontinuity}\label{sec4}

In the following we deal with the same problem of furnishing dual representations
for the monotone and cash-invariant hull of a convex risk function by using the
duality approach developed in Section \ref{sec3}, treating the particular case of a risk function which
\emph{fails to be lower semicontinuous}. We also discuss the difficulties which can arise when this topological assumption is missing.

For $p \in [1,\infty]$ consider $f:L^{p}\to\B$ defined by
\begin{equation*}f(X)=\left\{
\begin{array}{ll}
\|X-\mathbb{E}(X)\|_{p}, & \mbox {if } X_{-}\in L^{\infty},\\
+\infty, & \mbox{otherwise}.
\end{array}\right. \label{example}
\end{equation*}
This risk function is convex and fails to be lower semicontinuous for $p\in[1,\infty)$. One can easily verify that
$\dom f= L^{\infty}+L^{p}_{+}$.

Like in the previous section we take as ordering cone $L^p_+$, but work with a \emph{not necessarily constant numeraire} $\Pi \in L^p
\setminus \{0\}$. Our goal is to furnish a dual representation for the \emph{monotone $\Pi$-invariant hull} of $f$. To this end we will make use of the conjugate formula of $Y \mapsto \|Y-\mathbb{E}(Y)\|_{p}$, $p \in [1,\infty]$, which looks for $X^* \in (L^p)^*$ like (see \cite[Fact 4.3]{Bot-Lorenz})
\begin{equation}\label{eq40}
(\|\cdot-\mathbb{E}(\cdot)\|_{p})^{*}(X^*)=\!\left\{\!\!
\begin{array}{ll}
0, & \mbox {if} \ \!\exists Y^{*}\in (L^p)^*,\!\|Y^{*}\|_{(L^p)^*} \leq 1, \ \!\mbox{such that}\!\ X^* = Y^{*}-\mathbb{E}(Y^{*}),\\
+\infty, & \mbox{otherwise}.
\end{array}\right.
\end{equation}

\emph{The case $p = \infty$}. In this situation $\dom f = L^\infty$, $f$ is a convex and \emph{continuous} function and one can, consequently, use the qualification condition \eqref{qcsecsp41}, which is obviously fulfilled. Thus for the monotone $\Pi$-invariant hull of $f$ one can employ again formula \eqref{dualrep}.
This means that, by taking into consideration \eqref{eq40}, the \emph{monotone $\Pi$-invariant hull} of $f$ looks for all $X \in L^\infty$ like
\begin{equation}\label{eq41}
f_{L^{\infty}_{+},\Pi}(X) =\max_{\substack{\|Y^{*}\|_{(L^\infty)^*} \leq 1, \mathbb{E}(Y^*) - Y^* \in (L^\infty_+)^*\\
\mathbb{E}(Y^*\Pi) - \mathbb{E}(Y^*)\mathbb{E}(\Pi) + 1 = 0}}
\mathbb{E}(Y^*X)-\mathbb{E}(Y^*)\mathbb{E}(X).
\end{equation}
One can easily notice that if $\Pi$ is a \emph{constant numeraire}, then $f_{L^{\infty}_{+},\Pi} \equiv -\infty$.

\emph{The case $p \in [1,\infty)$}. In this second case we will proceed as follows: we first establish the monotone hull of $f$, along with a dual representation for it, then we discuss which are the difficulties that appear when trying to determine the dual representation of $f_{L^{p}_{+},\Pi}$. Recall that $f_{L^p_+,\Pi}(X) = (f_{L^p_+})_\Pi(X)$ for all $X \in L^p$. In this setting we denote the dual space of $L^p$ with $L^q$, $q=p/(p-1)$ (with the convention $1/0 = \infty$) and the same applies for the corresponding norm.

As $\dom f_{L^p_+} = \dom f + L^p_+ = L^\infty + L^p_+$, for every $X$ outside this set one has $f_{L^p_+}(X) = +\infty$. For $X \in L^\infty + L^p_+$ we have
\begin{equation}
f_{L^{p}_{+}}(X)=\inf_{\substack{Y \in L^\infty + L^p_+\\ Y-X \in -L^p_+}}\|Y-\mathbb{E}(Y)\|_{p} \label{primala ex}
\end{equation}
and, obviously, $f_{L^{p}_{+}}(X) \geq 0$. On the other hand, since $X = Z + Y$ for $Z \in L^\infty$ and $Y \in L^P_+$, it holds $X \geq \esinf Z$, thus $\esinf Z$ is feasible for the optimization problem in the right-hand side of \eqref{primala ex}, which means that $f_{L^{p}_{+}}(X) = 0$. Consequently,
$f_{L^p_+} = \delta_{L^\infty + L^p_+}$.

Before furnishing the monotone $\Pi$-invariant hull of $f$, let us shortly investigate how one could give dual representation for $f_{L^{p}_{+}}$. For $X \in L^\infty + L^p_+$ fixed one has to consider the convex optimization problem
\begin{equation}
\inf_{\substack{Y \in L^\infty + L^p_+\\ Y-X \in -L^p_+}}\|Y-\mathbb{E}(Y)\|_{p} \label{primsec4}
\end{equation}
and its \emph{Lagrange dual problem} (notice that $L^\infty$ is dense in $L^p$)
$$\sup_{\substack{X^{*}\in L^{q}_{+}}}\inf_{\substack{Y\in
L^\infty + L^p_+}}\{\|Y-\mathbb{E}(Y)\|_{p}+\langle X^{*},Y-X\rangle\} = \sup_{\substack{X^{*}\in L^{q}_{+}}}\inf_{\substack{Y\in L^p}}\{\|Y-\mathbb{E}(Y)\|_{p}+\langle X^{*},Y-X\rangle\}$$
or, equivalently,
\begin{equation}
\sup_{\substack{X^{*}\in -L^{q}_{+}}}\{\langle X^*,X \rangle - (\|\cdot-\mathbb{E}(\cdot)\|_{p})^{*}(X^*)\} = \sup_{\substack{\|Y^{*}\|_{q}\leq1,
\mathbb{E}(Y^*)-Y^*\in L^{q}_{+}}} \mathbb{E}(Y^{*}X)-\mathbb{E}(Y^*)\mathbb{E}(X).\label{dualsec4}
\end{equation}
In order to show that for the primal-dual pair \eqref{primsec4}-\eqref{dualsec4} strong duality holds, one needs to use the quasi-relative interior-type condition $(QC_4)$. Indeed, \eqref{primsec4} is of the same type as the problem $(P)$ from the preliminary section, when taking ${\cal X}={\cal Z}=S=L^p$, $K=L^p_+$, $g:L^p \rightarrow L^p$, $g(Y) = Y-X$ and having as objective function $f:L^{p}\to\B$,
\begin{equation*}f(Y)=\left\{
\begin{array}{ll}
\|Y-\mathbb{E}(Y)\|_{p}, & \mbox {if } Y_{-}\in L^{\infty},\\
+\infty, & \mbox{otherwise}.
\end{array}\right.
\end{equation*}
Since $X \in L^\infty + L^p_+$, for $X':=X-1 \in \dom f$ one has $g(X') \in -\qri L^p_+$ (see \cite{Borwein-Lewis}), while obviously $\cl(L^p_+ - L^p_+) = L^p$. More than that, as $(g(\dom f \cap S) + L^p_+) - (g(\dom f \cap S) + L^p_+) = L^p$ and the optimal objective value of \eqref{primsec4} is $f_{L^{p}_{+}}(X) = 0$, in order to show that $(QC_4)$ is verified, it is enough to prove that $(0,0)\notin \qi(\mathcal{E}_{f_{L^{p}_{+}}(X)})$, where
$$\mathcal{E}_{f_{L^{p}_{+}}(X)} = \{(\|Y-\mathbb{E}(Y)\|_{p} + \epsilon, Y-X+Z): Y\in L^{\infty}+L^{p}_{+}, Z\in L^{p}_{+}, \epsilon \geq 0\}.$$
Indeed, $(-1,0) \in N_{\mathcal{E}_{f_{L^{p}_{+}}(X)}}(0,0)$ and via Proposition \ref{caracterizari int}(ii) we get the desired conclusion. The qualification condition being verified it follows that
$$f_{L^{p}_{+}}(X) = \max_{\substack{\|Y^{*}\|_{q}\leq1,
\mathbb{E}(Y^*)-Y^*\in L^{q}_{+}}} \mathbb{E}(Y^{*}X)-\mathbb{E}(Y^*)\mathbb{E}(X)$$
and so one obtains for the monotone hull  of $f$ for all $X \in L^p$ the following dual representation
$$f_{L^{p}_{+}}(X)=\left\{
\begin{array}{ll}
 \max\limits_{\substack{\|Y^{*}\|_{q}\leq1,
\mathbb{E}(Y^*)-Y^*\in L^{q}_{+}}} \mathbb{E}(Y^{*}X)-\mathbb{E}(Y^*)\mathbb{E}(X), & \mbox {if } X\in L^{\infty}+L^{p}_{+},\\
+\infty, & \mbox{otherwise}.
\end{array}\right.$$

The \emph{monotone $\Pi$-invariant hull} of $f$ is the $\Pi$-invariant hull of $f_{L^p_+}$ and for its derivation we use the direct formulation of the latter, $f_{L^p_+}= \delta_{L^\infty + L^p_+}$, as it is easier to handle with. For all $X \in L^P$ the monotone $\Pi$-invariant hull of $f$ is
$$f_{L^{p}_{+},\Pi}(X)=\inf_{a\in\R}\{f_{L^{p}_{+}}(X-a\Pi)-a\} = \inf\limits_{\substack{(Y,a) \in (L^\infty + L^p_+) \times \R \\Y+a\Pi-X=0}} -a.$$
Since $f_{L^{p}_{+},\Pi}$ is the optimal objective value of a convex optimization problem, it is natural to ask if a dual formulation for it, via the duality theory, can be provided. Unfortunately, \emph{we are not always able to answer this question}. What we can say is, that for $X \in L^\infty + L^p_+ + \R\Pi = \dom f_{L^{p}_{+},\Pi}$ it holds
$f_{L^{p}_{+},\Pi}(X) = +\infty$. For $X \notin L^\infty + L^p_+ + \R\Pi$ one get as \emph{Lagrange dual problem} to
\begin{equation}\label{primalinvariant}
\inf\limits_{\substack{(Y,a) \in (L^\infty + L^p_+) \times \R\\Y+a\Pi-X=0}} -a
\end{equation}
the following optimization problem
$$\sup_{X^* \in L^q}\inf\limits_{(Y,a) \in (L^\infty + L^p_+) \times \R} [-a + \langle X^*, Y+a\Pi-X\rangle],$$
which, since $L^\infty$ is dense in $L^p$, is nothing else than
\begin{eqnarray}\label{dualinvariant}
\sup_{X^* \in L^q} \left [-\langle X^*,X \rangle + \inf_{a \in \R} a(\langle X^*,\Pi \rangle -1) + \inf_{Y \in L^p} \langle X^*, Y\rangle \right] =  -\infty
\end{eqnarray}
Nevertheless, we cannot be sure that this is the value which $f_{L^{p}_{+},\Pi}(X)$ takes, since no known qualification condition can be verified for \eqref{primalinvariant}-\eqref{dualinvariant}. This applies as well as for the classical generalized interior ones ($L^\infty + L^p_+$ is not closed) as for the one of quasi-relative interior-type. This emphasizes the fact that one can have exceptional situations for which the approach we use is, unfortunately, not suitable.

Let us also mention that whenever $\Pi \in L^\infty$ (which includes the situation when $\Pi$ is a \emph{constant numeraire}), then for all $a \in \R$ there exists $Y \in L^\infty + L^p_+$ such that $X = a\Pi + Y$ and so $f_{L^{p}_{+},\Pi}(X) = -\infty$. In this case we have for all $X \in L^p$
$$f_{L^{p}_{+},\Pi}(X)=\left\{
\begin{array}{ll}
-\infty, & \mbox {if } X\in L^{\infty}+L^{p}_{+} + \R\Pi,\\
+\infty, & \mbox{otherwise}.
\end{array}\right.$$

\begin{remark}\label{ctype}
The fact that $L^\infty + L^p_+$ is not closed does not make the applicability of the other main class of qualification conditions, the \emph{closedness-type} ones, for the convex optimization problem in \eqref{primalinvariant} possible, too.
\end{remark}

\end{document}